\numberwithin{equation}{section} 
\newtheorem{theorem}{Theorem}
\newtheorem{lemma}{Lemma}
\theoremstyle{remark}
\def \suchthat {\ \big | \ }
\title[Hessian regularity in the plane]{On the sharp hessian integrability conjecture\\ in the plane}
\author{ Thialita M. Nascimento $\&$ Eduardo V. Teixeira}
\address{Department of Mathematics,  University of Central Florida, Orlando, FL, USA}
\email{thnascimento@knights.ucf.edu}
\email{eduardo.teixeira@ucf.edu}
\begin{document}
\maketitle

\date{} 

\begin{abstract} 
 We prove that if $u\in C^0(B_1)$ satisfies $F(x,D^2u) \le 0$ in $B_1\subset \mathbb{R}^2$, in the viscosity sense, for some fully nonlinear $(\lambda, \Lambda)$-elliptic operator, then $u \in W^{2,\varepsilon}(B_{1/2})$, with appropriate estimates, for a sharp exponent $ \varepsilon = \varepsilon(\lambda, \Lambda)$ verifying
 $$
 	\frac{1.629}{\frac{\Lambda}{\lambda} + 1} < \varepsilon(\lambda, \Lambda)  \le \frac{2}{\frac{\Lambda}{\lambda} + 1},
 $$
uniformly as $\frac{\lambda}{\Lambda} \to 0$. This is closely related to the Armstrong-Silvestre-Smart conjecture, raised in [Comm. Pure Appl. Math. 65 (2012), no. 8, 1169--1184], where the upper bound is postulated to be the optimal one. 
\tableofcontents
\end{abstract}

\section{Introduction}
In the investigation of non-variational diffusion processes, the so-called $W^{2,\varepsilon}$-regularity estimate plays a central role.  Initially noted by F. Lin in \cite{Lin} and later generalized by L. Caffarelli in \cite{Caff} to the fully nonlinear setting, the $W^{2,\varepsilon}$-regularity estimate asserts that the hessian of solutions of uniformly elliptic PDEs is entitled to a {\it universal} integrability control, viz
$$
	\left ( \int_{B_{1/2}} |D^2 u|^\varepsilon dx \right )^{1/\varepsilon} \le C \|u\|_{L^\infty(B_1)},
$$ 
for a positive exponent $\varepsilon>0$ and $C > 1$, both depending only on dimension and ellipticity constants. No a priori continuity assumption is required on the coefficients, and thus this is a foundational result to the development of the theory, which yields higher order regularity for more structured models, see \cite{Caff, CC} for details.  

Original proofs of the $W^{2,\varepsilon}$-regularity estimate give no hint whatsoever on how the sharp hessian integrability exponent $\varepsilon = \varepsilon(n, \lambda, \Lambda)$ depends upon dimension and the ellipticity constants. In particular, no useful information on its magnitude could be obtained. Such a question became even more prominent after the work of Armstrong, Silvestre, and Smart, \cite{ASS}, on partial regularity of solutions to fully nonlinear elliptic equations. Under $C^1$ assumption on the operator $F$, the authors managed to prove that viscosity solutions to
\begin{equation}\label{Intro F=0}
	F(D^2 u) = 0, \quad B_1\subset{\mathbb{R}^n},
\end{equation}
are $C^2$-smooth, outside a set $\mathcal{S}$, of Hausdorff dimension strictly smaller than the ambient dimension $n$. Remarkably, the difference between $n$ and the Hausdorff dimension of the singular set  $\mathcal{S}$ is  controlled by the very exponent $\varepsilon$ from the $W^{2,\varepsilon}$-regularity theory, i.e.
$$
	\dim_{\textrm{H}}(\mathcal{S}) \le n - \varepsilon(n, \lambda, \Lambda).
$$

Singular sets are, in principle, unavoidable, e.g. \cite{NV}, and thus {\it quantitative} estimates on $\varepsilon$ are critical to improve Hausdorff measure controls of their sizes. This is one of the reasons why explicit estimates on the hessian integrability exponent of viscosity supersolutions are paramount to the theory of fully nonlinear PDE. 

In \cite{ASS}, the authors initiated the endeavor of gauging the magnitude of $\varepsilon$ and, by means of a clever construction, they managed to show that the quantity 
\begin{equation}\label{ASS Conj}
    \varepsilon_{\star} = \frac{2}{\frac{\Lambda}{\lambda} + 1}
\end{equation} 
constitutes a universal upper bound for the exponent $\varepsilon$. The example crafted in \cite{ASS} is two-dimensional, and after some heuristics, the authors are led to conjecture that \eqref{ASS Conj} 
is the (universal) optimal exponent in the $W^{2,\varepsilon}$-regularity theory. In higher dimensions, $n\ge 3$, however, it is possible to sharpen the upper bound by 
\begin{equation}\label{NT-CE}
	\bar{\varepsilon}_n:= \frac{n}{(n-1)\frac{\Lambda}{\lambda} + 1},
\end{equation} 
see \cite{Nasc-Teix}. The conjecture remains open in the case $n=2$, and the main theorem proven in this current paper is a step towards understanding the nuances of such an intricate question. We will prove the following:
\begin{theorem} \label{main thm} Let $0<\lambda < \Lambda$ be given and $u \in C(\overline{B}_1)$ satisfy $\mathcal{M}_{\lambda, \Lambda}^{-} (D^2 u) \le 0$ in $B_1 \subset \mathbb{R}^2$. Then $u \in W^{2,\varepsilon} (B_{1/2})$, with appropriate estimates, for an optimal exponent $\varepsilon(\lambda, \Lambda)>0$ that satisfies:
$$
	 \frac{1.629}{\frac{\Lambda}{\lambda} + 1} < \varepsilon(\lambda, \Lambda) \le \frac{2}{\frac{\Lambda}{\lambda} + 1}.
$$
\end{theorem}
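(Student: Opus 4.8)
Since $\varepsilon(\lambda,\Lambda)$ is to be read as the supremum of the admissible integrability exponents, the inequality $\varepsilon(\lambda,\Lambda)\le \frac{2}{\Lambda/\lambda+1}$ is exactly the content of the planar example of Armstrong--Silvestre--Smart in \cite{ASS}; so the work lies in the lower bound, and the plan is to show that every $\varepsilon<\frac{1.629}{\Lambda/\lambda+1}$ is admissible, with an estimate $\|D^2u\|_{L^\varepsilon(B_{1/2})}\le C\,\|u\|_{L^\infty(B_1)}$ whose constants stay under control as $\lambda/\Lambda\to0$. Write $\Theta:=\Lambda/\lambda$. First I would record a reduction to a purely one--sided statement that already accounts for the shape $\tfrac{c}{\Theta+1}$ of the exponent. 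A viscosity supersolution satisfies $\mathcal{M}^-_{\lambda,\Lambda}(D^2u)\le0$ \emph{pointwise} at a.e.\ point of twice--differentiability; if at such a point a paraboloid of opening $M$ touches $u$ from below, so that the eigenvalues $e_1\le e_2$ of $D^2u$ obey $e_1\ge-M$, then in the decisive mixed--sign case $\Lambda e_1+\lambda e_2\le0$ forces $e_2\le\Theta M$, and otherwise $e_2\le0$; in all cases $|D^2u|\le\Theta M$ there. Hence, writing $\mathcal{A}_M(u,B_1)$ for the set of points admitting no lower paraboloid of opening $M$ in $B_1$, a uniform decay $|\mathcal{A}_M(u,B_1)\cap B_{1/2}|\le C\,M^{-\varepsilon_0}$ over supersolutions upgrades, losing only a $\Theta$--dependent constant, to a two--sided decay controlling the pointwise Hessian, and thus to $u\in W^{2,\varepsilon}(B_{1/2})$ for every $\varepsilon<\varepsilon_0$ via $\int|D^2u|^\varepsilon\lesssim\int_0^\infty t^{\varepsilon-1}\,|\mathcal{A}_{ct}\cap B_{1/2}|\,dt$. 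So the task becomes: find the sharp decay exponent $\varepsilon_0(\lambda,\Lambda)$ for the one--sided bad sets $\mathcal{A}_M$, uniformly over supersolutions, and show it exceeds $\frac{1.629}{\Theta+1}$.

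This decay is produced, as in the classical theory, from a \emph{single--scale} density estimate and a Calder\'on--Zygmund iteration, so $\varepsilon_0=\log(1/\mu)/\log M$ where $(M,\mu)$ is the scale/fraction pair in that estimate. Concretely, after the affine normalization dictated by the iteration I would subtract the normalizing paraboloid, perturb the result downward by a fixed localized bump (equivalently, compare against an explicit planar barrier that is a supersolution away from a small inner region and exhibits near--extremal growth), pass to the convex envelope $\Gamma$ on a concentric ball, and work on the contact set $\mathcal{C}$. Two facts meet there: (i) an Alexandrov--Bakelman--Pucci lower bound on the Monge--Amp\`ere mass $\int_{\mathcal{C}}\det D^2\Gamma\,dx$, for which one should use the \emph{exact} two--dimensional ABP constant; and (ii) on $\mathcal{C}$ one has $D^2\Gamma\ge0$ together with the viscosity inequality transported to the tangent function, which both caps the opening detected there in terms of the curvature of the bump/barrier and --- this is where the planar, as opposed to higher--dimensional, structure is exploited --- permits a passage from $\int_{\mathcal{C}}\det D^2\Gamma$ to $|\mathcal{C}|$ costing only one power of $\Theta$ rather than two, by keeping the convex envelope degenerate in one direction along $\mathcal{C}$. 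Together these give $|\mathcal{A}_M\cap B_{1/4}|\le\mu\,|B_{1/4}|$ with an explicit $\mu=\mu(M,\lambda,\Lambda)=1-c\,\Theta^{-1}(1+o(1))$, and then the Calder\'on--Zygmund iteration yields $|\mathcal{A}_{M^k}\cap B_{1/2}|\lesssim\mu^k$. Several quantities remain free --- the tested opening $M$, the ratio of the concentric radii, the profile and height of the bump (or the exponent of the barrier), and the way the constrained trace $\mathrm{tr}\,D^2\Gamma\le\lambda^{-1}f$ on $\mathcal{C}$ is split by the arithmetic--geometric inequality against the Jacobian --- and the final move is to maximize $\log(1/\mu)/\log M$ over all of them as $\Theta\to\infty$; this is a short (transcendental) calculus problem whose value is $>1.629$, giving the stated bound.

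The hard part, and the reason the argument stops short of the conjectured $\varepsilon_\star=\frac{2}{\Theta+1}$, is the sharpness of the passage in (ii) from the Monge--Amp\`ere mass on $\mathcal{C}$ to $|\mathcal{C}|$: the viscosity inequality controls only a \emph{linear} combination of the two eigenvalues of $D^2\Gamma$, so quantifying $\det D^2\Gamma$ costs an arithmetic--geometric estimate, and the cumulative loss from this step (together with residual slack in the barrier and in the iteration) is strictly larger than what the near--extremal supersolutions behind \cite{ASS} realize --- those are obtained by superposing rescaled copies of a building block that pack their Hessian onto a set thin enough that the multiscale, essentially pointwise information used by Calder\'on--Zygmund cannot see it. To reach $2$ one would need a genuinely global description of the contact sets, or a truly extremal bounded planar barrier (which does not seem to exist in the required form), or an altogether different --- for instance function--theoretic, distortion--type --- encoding of the equation; I would leave that open, as does the conjecture. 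The remaining ingredients are routine: reduction to bounded, smooth supersolutions by inf--convolution and approximation; the compatibility of subtracting a paraboloid with $\mathcal{M}^-_{\lambda,\Lambda}$ through its sub/super--additivity $\mathcal{M}^-(X+Y)\le\mathcal{M}^-(X)+\mathcal{M}^+(Y)$; the promotion of $\mathcal{A}_M$--decay to a.e.\ twice--differentiability and to the $W^{2,\varepsilon}$ estimate; and checking that every constant entering the Calder\'on--Zygmund iteration stays uniform as $\Theta\to\infty$.
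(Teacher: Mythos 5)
There is a genuine gap: the one number that matters in the theorem, $1.629$, is asserted but never produced. Your proposal rebuilds the older Caffarelli pipeline --- normalize by a barrier/bump, pass to the convex envelope on the contact set, invoke an ABP lower bound on the Monge--Amp\`ere mass, extract a single-scale density estimate $|\mathcal{A}_M\cap B_{1/4}|\le\mu|B_{1/4}|$, and iterate \`a la Calder\'on--Zygmund. That framework, even with the ``exact two-dimensional ABP constant'' and the vague suggestion of ``keeping the convex envelope degenerate in one direction,'' leaves you with several coupled free parameters (the bump profile, the opening $M$, the concentric radii, the AM--GM split) and no identified mechanism that collapses their joint optimization to an explicit transcendental problem with value $>1.629$; you simply assert that it does. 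Moreover the Calder\'on--Zygmund localization is exactly the step the paper is careful to \emph{avoid}, because it loses constants in a way that is hard to track sharply in $\lambda/\Lambda$.

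The paper instead uses the sliding-paraboloid method without localization (Mooney) together with an intrinsic optimization of the dyadic opening. The decisive object is the vertex map $\Phi(x)=x+\delta^{-1}\nabla v(x)$ (with $v=a^{-1}u+\tfrac12|x|^2$); by the area formula the slid set satisfies $|F|\le\int_E\det D\Phi$, and in the plane the eigenvalues of $D\Phi$ are affine in those of $D^2u$, constrained by $\mathcal{M}^-_{\lambda,\Lambda}(D^2u)\le0$ and $D^2u\ge-(1+\delta)a\,\mathrm{Id}$ to lie in a triangle. Maximizing $\det D\Phi$ over that triangle gives the exact constant
\[
c(\lambda,\Lambda)=\Bigl[1+\tfrac14\tfrac{\Lambda}{\lambda}\bigl(1-\tfrac{\lambda}{\Lambda}\bigr)^2\Bigr]^{-1}
=\frac{4\Lambda}{\lambda(1+\Lambda/\lambda)^2},
\]
and hence the measure decay $|A_{(1+\delta)a}\setminus A_a|\ge c\,(1+\delta^{-1})^{-2}|F|$. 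One then treats the opening ratio as a variable $x=(1+\delta^{-1})^{-1}\in(0,1)$ and gets $\varepsilon(\lambda,\Lambda)\ge\sup_{x\in(0,1)}\ln(1-cx^2)/\ln(1-x)$; linearizing $-\ln(1-cx^2)\ge cx^2$ and computing $m_0(2)=\sup_{(0,1)}x^2/(-\ln(1-x))>0.4073$ yields, as $\lambda/\Lambda\to0$, $\varepsilon>4m_0(2)(\Lambda/\lambda+1)^{-1}(1-o(1))>1.629/(\Lambda/\lambda+1)$. None of this three-step mechanism --- the constrained Jacobian maximization over the eigenvalue triangle, the intrinsic scaling sup over $x$, and the linearization that isolates $m_0(2)$ --- appears in your sketch, and it is precisely what turns the qualitative ``decay implies $W^{2,\varepsilon}$'' skeleton into the quantitative statement. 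Your preliminary observations (the ASS upper bound, the pointwise reduction via $\Lambda e_1+\lambda e_2\le0$, inf-convolution, the layer-cake passage to $W^{2,\varepsilon}$) are correct and also appear in the paper, but they are the easy periphery.
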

Naturally, if $\frac{\lambda}{\Lambda}$ is close to 1, then $\varepsilon(\lambda, \Lambda) $ is too asymptotically close to 1, and the analysis carried out in this paper yields explicit (quantified) lower bounds for $\varepsilon(\lambda, \Lambda)$, as $\frac{\lambda}{\Lambda}$ varies within the open interval $(0,1)$.  The main information given by Theorem \ref{main thm} is that the sharp hessian integrability exponent in the plane remains {\it at least} 81.45\% of the upper bound, uniformly as $\frac{\lambda}{\Lambda}$ tends to zero. 

It is also interesting to note that, as a consequence of Theorem \ref{main thm}, one has $\varepsilon(\lambda, \Lambda) > \frac{\lambda}{\Lambda}$, provided $0< \frac{\lambda}{\Lambda} < 0.629$. Hence, in view of the upper estimate $\varepsilon(\lambda, \Lambda, n) < \bar{\varepsilon}_n$, mentioned above, one deduces that it is indeed impossible to obtain an adimensional Hessian integrability exponent for the $W^{2,\varepsilon}$-regularity theory.

We conclude this introduction by discussing the main ingredients needed in proof of Theorem \ref{main thm}. We use a refinement of the so-called {\it method of sliding paraboloids}, originally introduced by X. Cabre in \cite{Cabre} and O. Savin in \cite{Savin}; see also \cite{Le, Mooney} for further applications of the method to Hessian integrability estimates. Following ideas introduced in \cite{Nasc-Teix}, we delegate the choice of the best dyadic openings to an optimization procedure; a sort of {\it intrinsic scaling} of the problem.  The method then generates a family of extremum problems involving a parameter $c = c(\lambda, \Lambda)$. The main new tool developed in this paper is an essentially sharp measure decay estimate in the plane, which allows one to extract the best possible constant $c$ for the purpose of estimating $\varepsilon(\lambda, \Lambda) $ from below. A linearization analysis then yields improved lower bounds for the Hessian integrability exponent $\varepsilon(\lambda, \Lambda)$, as stated in Theorem \ref{main thm}.


\section{Preliminaries} \label{sct Prelim}
In this preliminary section, we give some standard definitions that will be used throughout the paper. We start off with the notion of Pucci's extremal operators. For a positive integer $n \ge 2$,  let $\mathcal{S}(n)$ be the set of all real $n \times n$ symmetric matrices.  We define for $0 < \lambda \le \Lambda$ and $M \in \mathcal{S}(n)$, the {Pucci's extremal operators} by
\begin{equation}\label{Pucci def}
	\mathcal{M}_{\lambda,\Lambda}^{-} (M) = \inf\limits_{\lambda \text{Id}_n \le A \le \Lambda \text{Id}_n} \textrm{Trace} (AM) \quad \mbox{and}\quad \mathcal{M}_{\lambda, \Lambda}^{+} (M) =  \sup\limits_{\lambda \text{Id}_n \le A \le \Lambda \text{Id}_n} \textrm{Trace} (AM)
	\end{equation}
where the $\inf$ and $\sup$ are taken over all symmetric matrices $A$ whose eigenvalues belong to $[\lambda, \Lambda].$ 

In this article we are interested in viscosity supersolutions of {\it any} possible $(\lambda, \Lambda)$-uniformly elliptic operator $F(x, D^2u)$. This is equivalent to investigating the extremal equation
\begin{equation}\label{main eq}
    \mathcal{M}_{\lambda, \Lambda}^{-} (D^2 u) \le 0,
\end{equation}
in the viscosity sense. For the notion of viscosity solutions, please visit the book \cite{CC} or the classical paper \cite{CIL}.

As usual, we will establish a hessian integrability result by measuring the decay of the minimum curvature of paraboloids touching $u$ from below at a given point $x_0 \in\Omega$. This leads to the definition of the function $\underline{\Theta} (u,\Omega) (x_0) = \underline{\Theta}  (x_0)$ given as
\begin{equation}\label{eq2.2}
	\underline{\Theta}  (x_0):=  \inf \left \{  A >0 \suchthat u(x) \ge u(x_0) + y\cdot (x-x_0) - \frac{A}{2}|x-x_0|^2 \text{ in } \Omega, \text{ for some } y \in \mathbb{R}^n \right \}.  
\end{equation}
If there is no tangent paraboloid from bellow at $x_0$,  we set $\underline{\Theta} (u,\Omega) (x_0) = + \infty$.  For $a > 0$ and $L(x)$ an affine function, we say  
$$
	P_{L}^{a} (x) = - \frac{a}{2} |x|^2 + L(x)
$$
is  a paraboloid of opening $-a$. If $\Omega$ is a bounded, strictly convex domain and $v \in C( \overline{\Omega})$, we define the $a$-convex envelope $\Gamma_{v}^a$ on $\overline{\Omega}$ as 
$$
	\Gamma_{v}^a (x) := \sup\limits_{L} \left \{P_{L} ^{a} (x) : P_{L} ^{a} \le v \,\, \mbox{in}\,\, \overline{\Omega} \right \}.
$$
By convexity of $\Omega$, one verifies that $ \Gamma_{v}^a = v$ on $\partial \Omega$. Also, taking $a = 0$, one recovers the usual notion of convex envelopes by affine functions.  Next we  define 
$$
	A_a (v) : = \left \{ x \in \Omega \suchthat v(x) = \Gamma_{v}^a (x) \right \}.
$$
That is, $A_a(v) $ is the set of points in $\Omega$ where $v$ has a tangent paraboloid of opening $-a$ from below in $\Omega.$ One can check that $A_a(v)$ is closed in $\Omega$ and that $A_a(v) \subset A_b (v)$ if  $a \le b$ . Also, it is easy to verify that for any $\lambda, \, \gamma \in \mathbb{R}$ and $\beta > 0$, there holds:
\begin{equation}\label{eq2.3}
	\Gamma_{\beta v + \frac{\gamma}{2} |x|^2}^{\lambda} = \beta \Gamma_{v}^{\frac{\lambda + \gamma}{\beta}}  + \frac{\gamma}{2} |x|^2   \quad \mbox{ and } \quad  A_{\lambda} \left(\beta v + \frac{\gamma}{2} |x|^2\right) = A_{\frac{\lambda + \gamma}{\beta}} (v).
\end{equation}

\section{A new measure estimate in the plane} \label{sct new lemma}

In this section, we obtain an improved estimate on the measure decay of the contact sets of a continuous function $u$ of two variables, satisfying \eqref{main eq}, and its lower envelope of paraboloids. Such a tool will foster an improved parameter $c(\lambda, \Lambda)$ for the extremum problem used to bound from below the hessian integrability exponent in the $W^{2,\varepsilon}$-regularity theory. 

\begin{lemma}\label{measure lemma}
	Let $u \in C (\overline{\Omega})$ and  assume $ \mathcal{M}_{\lambda,\Lambda}^{-} ( D^2 u) \le 0$ in $\Omega \subset \mathbb{R}^2$.  Given $a, \delta > 0$ and a measurable set $F \subset\{u > \Gamma_u ^a\}$, take the paraboloids of opening $-(1 + \delta)a $, tangents from below to $\Gamma_u ^a$ on $F$, and slide them up until they touch $u$ on a set $E $. If $E \Subset \Omega$ then, 
	\begin{equation}\label{Thesis l3}
		| A_{(1+ \delta)a} (u) \setminus A_a(u) | \ge c  \left(1 + \frac{1}{\delta}\right)^{-2} |F|,
	\end{equation}
	where $c = c( \lambda, \Lambda)$ is given by:
	\begin{equation}\label{c in l3}
		c  :=  \left[ 1 + \frac{1}{4} \frac{\Lambda}{\lambda} \left( 1 - \frac{\lambda}{\Lambda}\right)^2 \right] ^{-1}.
	\end{equation}
\end{lemma}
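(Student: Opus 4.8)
The plan is to follow the classical Savin--Cabré sliding-paraboloids scheme, but to be careful to extract the sharpest possible constant from the elementary area-comparison built into the argument. The map that organizes everything is the "normal-map"/gradient-of-envelope correspondence: to each point where a paraboloid of opening $-(1+\delta)a$ touches $\Gamma_u^a$ from below on $F$, associate the affine function $L$ realizing the contact; sliding it up until it first touches $u$ defines a touching point in $E \subset A_{(1+\delta)a}(u)\setminus A_a(u)$ (the touching point cannot lie in $A_a(u)$, since at such points $u=\Gamma_u^a$ has only paraboloids of opening $-a$ from below, and $(1+\delta)a>a$). The key is then to compare the measure of the "slopes" $y$ swept out over $F$ with the measure of the slopes realized over $E$, and to transfer this to a comparison of $|F|$ and $|E|$ using bounds on the second derivatives of $\Gamma_u^a$ on $F$ and of the touching paraboloids on $E$.

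First I would set up the change of variables. On $F\subset\{u>\Gamma_u^a\}$ the envelope $\Gamma_u^a$ is $C^{1,1}$ and, where twice differentiable, satisfies $D^2\Gamma_u^a \ge -a\,\mathrm{Id}$; moreover, since $\Gamma_u^a$ is a sup of paraboloids of fixed opening $-a$, the function $\Gamma_u^a + \frac a2|x|^2$ is convex, so on $F$ one has $-a\,\mathrm{Id}\le D^2\Gamma_u^a$. The slope of the touching paraboloid of opening $-(1+\delta)a$ at $x\in F$ is $y(x) = D\Gamma_u^a(x) + (1+\delta)a\,x$; hence $Dy = D^2\Gamma_u^a + (1+\delta)a\,\mathrm{Id} \ge \delta a\,\mathrm{Id} > 0$, and the Jacobian of the slope map over $F$ is bounded below. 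On the other hand, at a touching point $\bar x\in E$, $u$ has a paraboloid of opening $-(1+\delta)a$ from below, so $u$ is punctually twice differentiable there (in the viscosity-solution sense of Jensen/Alexandrov applied along the contact set) with $D^2u(\bar x)\ge -(1+\delta)a\,\mathrm{Id}$, and the equation $\mathcal M^-_{\lambda,\Lambda}(D^2u)\le 0$ forces an upper bound on $D^2u(\bar x)$: writing the eigenvalues $e_1\le e_2$ of $D^2u(\bar x)$, the Pucci constraint $\lambda e_2^+ - \Lambda e_2^- + \lambda e_1^+ - \Lambda e_1^-\le 0$ together with $e_i\ge -(1+\delta)a$ yields $e_2 \le \tfrac{\Lambda}{\lambda}(1+\delta)a$ and $e_1\le \tfrac{\Lambda}{\lambda}(1+\delta)a$ as well, so $D^2\big(u+(1+\delta)a\tfrac{|x|^2}{2}\big)(\bar x)$ has both eigenvalues in $[0,(1+\tfrac{\Lambda}{\lambda})(1+\delta)a]$. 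The slope map over $E$ is $\bar y(\bar x) = Du(\bar x) + (1+\delta)a\,\bar x$ with $D\bar y = D^2u + (1+\delta)a\,\mathrm{Id}\ge 0$. Since the set of slopes realized over $F$ coincides with the set of slopes realized over $E$ (the same affine functions, just slid vertically), the area formula gives
\begin{equation}\label{slope-comparison}
	\int_F \det\!\big(D^2\Gamma_u^a + (1+\delta)a\,\mathrm{Id}\big)\,dx \;\le\; \int_E \det\!\big(D^2u + (1+\delta)a\,\mathrm{Id}\big)\,d\bar x .
\end{equation}

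The heart of the matter is to bound the two Jacobians optimally, and this is where the two-dimensionality and the precise constant $c$ enter. On $F$ the eigenvalues of $D^2\Gamma_u^a$ lie in $[-a,\infty)$, so each eigenvalue of $D^2\Gamma_u^a+(1+\delta)a\,\mathrm{Id}$ is at least $\delta a$, giving $\det\ge (\delta a)^2$ on $F$. On $E$ the eigenvalues of $D^2u+(1+\delta)a\,\mathrm{Id}$ lie in $[0,(1+\tfrac{\Lambda}{\lambda})(1+\delta)a]$, but they are \emph{not} free: they are constrained to the segment cut out by the Pucci inequality. The quantity to maximize is the product of two nonnegative reals $\mu_1,\mu_2$ subject to $\mu_i \le (1+\tfrac{\Lambda}{\lambda})(1+\delta)a$ and to the linear constraint coming from $\mathcal M^-_{\lambda,\Lambda}(D^2u)\le 0$ rewritten in the shifted variables; an elementary optimization of $\mu_1\mu_2$ over this polygon produces exactly the factor
\begin{equation}\label{opt-product}
	\max \mu_1\mu_2 \;=\; \Big[\,1 + \tfrac14\,\tfrac{\Lambda}{\lambda}\big(1-\tfrac{\lambda}{\Lambda}\big)^2\,\Big]\,(1+\delta)^2 a^2 \;=\; \frac{(1+\delta)^2 a^2}{c},
\end{equation}
with $c$ as in \eqref{c in l3}. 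Plugging the lower bound $(\delta a)^2$ on $F$ and the upper bound $c^{-1}(1+\delta)^2a^2$ on $E$ into \eqref{slope-comparison} gives $(\delta a)^2 |F| \le c^{-1}(1+\delta)^2 a^2 |E|$, i.e. $|E|\ge c\,\delta^2(1+\delta)^{-2}|F| = c\,(1+\tfrac1\delta)^{-2}|F|$, and since $E\subset A_{(1+\delta)a}(u)\setminus A_a(u)$ this is exactly \eqref{Thesis l3}.

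The main obstacle — and the only place genuine care is required — is the rigorous justification of \eqref{slope-comparison}: one must know that $\Gamma_u^a$ is $C^{1,1}$, that the sliding procedure indeed lands touching points in $\Omega$ (this is where the hypothesis $E\Subset\Omega$ is used, to rule out contact escaping to the boundary and to localize), that $u$ is punctually twice differentiable a.e. on the contact set with the expected one-sided Hessian bound, and that the slope maps are injective off a null set so the area formula applies cleanly. These are by now standard in the sliding-paraboloids literature (cf. \cite{Cabre, Savin, CC}), and I would cite them rather than redo them. The genuinely new, and delicate, contribution is identifying that the correct quantity to optimize on the contact side is the \emph{product} of the shifted eigenvalues over the Pucci polygon, and checking that its maximum is attained at the asymmetric vertex producing $\tfrac14\tfrac{\Lambda}{\lambda}(1-\tfrac{\lambda}{\Lambda})^2$ rather than at the symmetric point — it is precisely this sharp constant $c(\lambda,\Lambda)$, strictly larger than the naive one, that later feeds the linearization yielding the $1.629$ in Theorem \ref{main thm}.
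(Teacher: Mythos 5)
Your proposal follows essentially the same route as the paper: your slope map $\bar y = Du + (1+\delta)a\,x$ is just the paper's vertex map $\Phi = x + \tfrac1\delta\nabla v$ (with $v=\tfrac1a u+\tfrac12|x|^2$) rescaled by the constant $\delta a$, so the Jacobian comparison and the key step — maximizing $\det$ over the Pucci-constrained eigenvalue region to extract $c^{-1}=1+\tfrac14\tfrac{\Lambda}{\lambda}\big(1-\tfrac{\lambda}{\Lambda}\big)^2$ — are identical, and your lower bound $\int_F\det\ge(\delta a)^2|F|$ is exactly the rescaled form of Mooney's $|V_F|\ge|F|$ which the paper cites. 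The one piece you defer to references that the paper actually carries out is the reduction to the semi-concave case via inf-convolution $u_m=\inf_y\{u(y)+m|y-x|^2\}$, needed to justify a.e.\ twice differentiability on $E$ and the applicability of the area formula there.
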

\begin{proof} 
In view of \cite[Lemma 3.1 and Lemma 3.2]{Mooney} and the subsequent analysis carried out in \cite{Nasc-Teix}, for $\delta, a > 0$, to be chosen {\it a posteriori}, the new contact point $E \subset \overline{\Omega}$ satisfies:
	$$
		E \subset A_{\delta} (v) \setminus A_0 (v) =  A_{(1+ \delta)a} (u) \setminus A_a(u),
	$$	
where $\displaystyle v= \frac{1}{a}u + \frac{1}{2}|x|^2 $. Also, if $V_F$ denotes the set of vertices of all tangent paraboloids of opening $-\delta$, touching $u$ from below at a point $x_{v}^0$ in $F$, then
	\begin{equation}\label{l3-eq.1}
	|V_F| \ge |F|.
	\end{equation}
	The vertexes of such paraboloids remain the same, after we slide them up, and thus 
	$$
		V_{F} = V_E.
	$$
	For the time being, we assume that $u$ is semi-concave, and thus twice differentiable a.e in $E$. For such points $x \in E$,  the correspondent vertex is given by 
	$$ 
		\Phi(x) := x_V = x + \frac{1}{\delta} \nabla v(x).
	$$
	Since $\Phi (x)$ is a Lipschitz map, we can apply the area formula and reach
	\begin{equation}\label{l3-eq.2}
		|V_E| = |\Phi(E)| \le \int_{E} |\det (D \Phi) |  dx.
	\end{equation}
	By the definition of $\Phi$ (and of $v$), the eigenvalues of $D \Phi$ are of the form 
	$$ 
		\lambda_i ^{\Phi} = \left(1 + \frac{1}{\delta}\right) + \frac{1}{\delta a} \lambda_i ^{u},\quad  i = 1, 2,
	$$
	where $\lambda_i^{u}$ are the eigenvalues of $D^2 u$. Now, because $ x \in E \subset A_{(1+\delta)a}(u) \setminus A_{a}(u)$, we have 
	$$ 
		D^2 u(x) \ge -(1+\delta)a \cdot \text{Id}.
	$$ 
	Thus, the eigenvalues of $D\Phi (x)$ are all non-negative. Next, since $\mathcal{M}_{\lambda, \Lambda}^{-} (D^2 u) \le 0$,   the Hessian of $u$ has at least one non-positive eigenvalue, say $\lambda_1^{u} \le 0.$ If both eigenvalues are non-positive, $u$ is concave and we can estimate
	$$
	    	\det (D \Phi) = \lambda_1^{\phi} \cdot \lambda_2^{\phi} \le \left( 1 + \frac{1}{\delta} \right)^2.
	$$
	If $\lambda_1^{u} \le 0 \le \lambda_2^{u}$, the equation $\mathcal{M}_{\lambda, \Lambda}^{-}(D^2 u (x)) \le 0$, yields:	
	\begin{eqnarray}
		\lambda_1^{u} \le - \frac{\lambda}{\Lambda} \lambda_2^{u}. \nonumber
	\end{eqnarray}
	In this case, the determinant of $D \Phi$,
	\begin{eqnarray}\label{l3-eq.3}
			\det (D \Phi) &=& \lambda_1^{\phi} \cdot \lambda_2^{\phi} \nonumber \\
			&= & \left[ \left( 1 +\frac{1}{\delta} \right) + \frac{1}{\delta a} \lambda_1^u \right] \left[\left( 1 +\frac{1}{\delta} \right) + \frac{1}{\delta a} \lambda_2^u \right], \nonumber
\end{eqnarray}
can be seen as a function of $(\lambda_1^{u} , \lambda_2^{u})$ defined on the triangle
$$
	T = \left[-(1 + \delta)a, 0\right] \times \left[0, -\frac{\Lambda}{\lambda} \lambda_1^{u}\right],
$$ 
obtained by varying $\lambda_1^{u}$ within $\left[-(1 + \delta)a, 0\right]$. Easily one checks that the maximum of such a function is attained along the line $\displaystyle \lambda_{2}^{u} = -\frac{\Lambda}{\lambda} \lambda_1^{u}$.  Therefore,  we can estimate
\begin{equation}\label{l3-eq.3}
	\begin{array}{lll}
			\displaystyle \det (D \Phi)	  &\le& \displaystyle \left[ \left( 1 +\frac{1}{\delta} \right) - \frac{\lambda}{\Lambda\delta a} \lambda_2^u \right] \left[ \left( 1 +\frac{1}{\delta} \right) + \frac{1}{\delta a} \lambda_2^u \right]\\
			&=& \displaystyle \left(1+ \frac{1}{\delta} \right)^2 +\frac{1}{\delta a}\left(1+ \frac{1}{\delta} \right) \left( 1 - \frac{\lambda}{\Lambda}\right)\lambda_2^u - \frac{1}{(\delta a)^2}\frac{\lambda}{\Lambda} (\lambda_2^u)^2\\
			&\le&  \displaystyle  \max\limits_{\lambda_2^{u}} \left(\left(1+ \frac{1}{\delta} \right)^2 +\frac{1}{\delta a}\left(1+ \frac{1}{\delta} \right) \left( 1 - \frac{\lambda}{\Lambda}\right)\lambda_2^u - \frac{1}{(\delta a)^2}\frac{\lambda}{\Lambda} (\lambda_2^u)^2  \right) \\
			&=& \displaystyle \left(1+ \frac{1}{\delta} \right)^2 \left[ 1 + \frac{1}{4} \frac{\Lambda}{\lambda} \left( 1 - \frac{\lambda}{\Lambda}\right)^2 \right].
	\end{array}
\end{equation}
 
 By letting $\displaystyle c^{-1} = \left[ 1 + \frac{1}{4} \frac{\Lambda}{\lambda} \left( 1 - \frac{\lambda}{\Lambda}\right)^2 \right] $, in both cases we can  estimate:
	\begin{equation}\label{key est}
		\begin{array}{lll}
			\displaystyle |F|\le |V_E| \displaystyle	&\le& \int_{E} \det(D \Phi (x) )  \\
					&\le&  \displaystyle c^{-1}\left(1 + \frac{1}{\delta}\right)^2  |E|   \\
					&\le & \displaystyle c^{-1}\left(1 + \frac{1}{\delta}\right)^2  \left | A_{\delta} (v) \setminus A_0 (v) \right |   \\
					& =& \displaystyle c^{-1}\left(1 + \frac{1}{\delta}\right)^2  \left | A_{(1 + \delta)a}( u) \setminus A_a (u) \right |,
		\end{array}
	\end{equation}
	and the lemma is proved for semi-concave functions. For the general case, we reduce it to the previous one by using the inf-convolution. Define
	$$
		u_{m} := \inf\limits_{y \in \Omega} \left \{ u(y) + m | y -x |^2 \right \}.
	$$	
	One can show, see e.g. the proof of \cite[Lemma 2.1]{Savin},  that $u_{m}$ are semi-concave and converge locally uniformly to $u$ in $\Omega$ as $m \to \infty$. Furthermore $u_{m}$ is a viscosity supersolution of the same equation in $\Omega^{\prime} \Subset \Omega$ and by the first part of the proof, 
	$$
		|E_{m}| \ge \beta |F| 
	$$
	with $\displaystyle \beta =  c  \left(1 + \frac{1}{\delta}\right)^{-2}$ and where $E_{m} $ is the correspondent touching set for $u_{m}$. One can easily check that
	$$
		\limsup\limits_{m \to \infty} E_{m}  = \bigcap\limits_{m \ge 1} \bigcup\limits_{i \ge m} E_{i} \subset E,
	$$
	and thus, 
	$$
		|A_{(1 + \delta)a}( u) \setminus A_a (u) | \ge \beta |F|.
	$$
	The proof of the Lemma is complete.
\end{proof}

\section{Scaling optimization} \label{NT1}

In this section, we  give a brief description of the strategy put forward in \cite{Nasc-Teix}, leading to the critical function $\varepsilon \colon (0,1)\times (0,1] \to (0,1]$,
\begin{equation}\label{critical function eps}
    \varepsilon(x, c) :=\frac{\ln(1- cx^n)}{\ln(1- x)},
\end{equation}
where $n\ge 2$ is the ambient dimension. 

Classically, lower bounds for an optimal integrability exponent $\varepsilon_{\star}$ in the $W^{2,\varepsilon}$-regularity theory comes from the decay estimate 
\begin{equation}
    \left| \left\lbrace \underline{\Theta}_{u} >  t \right\rbrace  \cap B_{1/2} \right|  \le C t^{- \varepsilon},
  \nonumber
\end{equation}
which implies that $\varepsilon \le \varepsilon_{\star}$, see for instance \cites{Caff, CC}. The method relies on a dyadic decomposition for the level set of $\underline{\Theta}_{u}$.  Since 
$$
	\{x\in B_1 \suchthat  \underline{\Theta}_{u} > t \} \subset B_1 \setminus A_{t} (u)
$$ 
it suffices to show decay in measure of the sets $B_1 \setminus A_{t} (u)$. For any $t \ge 2$, one has 
$$ 
    B_1 \setminus A_{2^{k+1}}(u) \subset A_{t} (u) \subset B_1 \setminus A_{2^k}(u),
$$
 for some $k \in \mathbb{N}$, and thus the idea is to estimate the decay in measure of the sets $B_1 \setminus A_{2^k}(u)$. Mooney, in \cite{Mooney}, implements the method of sliding paraboloids, avoiding the localization argument used by Caffarelli in \cites{Caff, CC}. This leads to an estimate of the form
\begin{equation}\label{Mooney's measure decay}
    |A_{2^{k+1}}(u) \setminus A_{2^k} (u) | \ge c_0 |B_1 \setminus A_{2^{k}}(u)|
\end{equation}
where $c_0$ depends only on the ellipticity constants and dimension.  

In \cite{Nasc-Teix}, the authors introduced an intrinsic optimization process within the recursive measure decay algorithm. If the dyadic radii for the level sets of $\underline{\Theta}_{u}$ is taken to be (for now) an arbitrary number $r=1 + \delta, \, \delta > 0$, a similar analysis yields an estimate analogous to \eqref{Mooney's measure decay}; however the corresponding constant $c_0$ now depends on the ellipticity constants, dimension, as well as on $\delta$. A change of variables, $x := (1+ \delta^{-1})^{-1}$, then leads to critical function \eqref{critical function eps}, where $c = c(\lambda, \Lambda, n)$ is the very same constant appearing in the corresponding estimate \eqref{Thesis l3}. Hence, in view of Lemma \ref{measure lemma}, in the plane we are entitled to use the constant in \eqref{c in l3}.

For each $x\in (0,1)$, the value $\varepsilon(x, c) $ can be used as to estimate from below the hessian integrability exponent of viscosity supersolution. Thus, the idea now is to optimize, {\it a posteriori}, the choice of the dyadic radii, by calculating the supremum of $\varepsilon(x, c)$ as $x$ runs from $0^{+}$ to $1^{-}$. If $x_\star$ is the critical point, then $r_\star = 1+ \delta_\star$, where $x_\star = (1+ \delta_\star^{-1})^{-1}$, is the best (intrinsic) dyadic radius for the recursive algorithm. In conclusion, one can estimate $\varepsilon$ from below by  $\varepsilon_{\star} = \sup\limits_{(0,1)} \frac{\ln(1- cx^n)}{\ln(1- x)}$.

\section{Linearization analysis} \label{sct lower estimates}

In view of Lemma \ref{measure lemma}, and following the strategy explained in the previous Section, we are led to the analysis of the function
\begin{equation}\label{th1-ep}
	 	\varepsilon(\lambda, \Lambda) := \sup\limits_{(0,1)}\frac{\ln(1- c(\lambda, \Lambda)x^2)}{\ln(1- x)},
\end{equation} 
	where, 
\begin{equation}\label{th1-ep2}
		c(\lambda, \Lambda)= \left[ 1 + \frac{1}{4} \frac{\Lambda}{\lambda} \left( 1 - \frac{\lambda}{\Lambda}\right)^2 \right] ^{-1} = \frac{4\Lambda}{\lambda (1 + \frac{\Lambda}{\lambda})^2}.
\end{equation}
For any $0< c< 1$, one can estimate  
$$
	-\ln(1- cx^2) = - c \ln\left ((1- cx^2)^{1/c} \right ) \ge -c x^2,
$$
with asymptotic equality as $0< c\ll 1$.  Thus, we can write down the pointwise inequality  
\begin{equation} \label{assintotic ineq for eps}
    	\varepsilon(x, \lambda, \Lambda)  := \frac{\ln(1- c(\lambda, \Lambda)x^2)}{\ln(1- x)} > \frac{c(\lambda, \Lambda)x^2}{- \ln(1-x)}, \quad x \in (0,1).
\end{equation}
In particular, one has $\varepsilon(\lambda, \Lambda)  \ge c(\lambda, \Lambda) m_0(2)$, where 
\begin{equation}\label{m0}
    m_0(2) = \sup_{(0,1)}\left ( \frac{x^2}{- \ln(1-x)} \right ) > 0.4073.
\end{equation}
Hence, 
\begin{equation}\label{lim}
	\varepsilon(\lambda, \Lambda) > \frac{4\Lambda}{\lambda (1 + \frac{\Lambda}{\lambda})^2} m_0(2) > \left (1-\text{o(1)} \right ) \frac{1.629}{\frac{\Lambda}{\lambda} +1},
\end{equation}	
as $ \frac{\lambda}{\Lambda} \to 0$. This is exactly the asymptotic thesis of Theorem \ref{main thm}. 

To obtain refined estimates, let us now return to the optimization problem \eqref{th1-ep}. For $0< c< 1$ fixed, consider the parabola
$$
	p_c(t) = 1- ct^2,
$$
defined over the open interval $(0,1)$. Now, for $d \in (0,1]$, let us inspect the family of curves $\varphi_d\colon (0,1) \to (0,1)$ given by
$$
	\varphi_d(t) = (1-t)^d.
$$
When $d= 1$, the corresponding function $\varphi_1(t) = (1-t)$ is strictly below $p_c(t)$. As we decrease $d$ continuously, the graph of $\varphi_d(t)$ bends towards  the graph of $p_c(t)$. Let $0<d_c<1$ be the first value such that  $\varphi_d(t)$ touches $p_c(t)$, i.e.
$$
	d_c := \inf \left \{ d \in (0,1] \suchthat \varphi_d(t) < p_c(t), ~ \forall t\in (0,1) \right \}.
$$
Let $x_c$ be the point in $(0,1)$ determined by the equation $\varphi_{d_c}(x_c) = p_c(x_c)$. The following system of equations then holds:
\begin{equation}\label{sys}
	\left \{
		\begin{array}{cll}
			\displaystyle 1-cx_c^2 &=& \displaystyle (1-x_c)^{d_c} \\
			\displaystyle 2cx_c &=& \displaystyle d_c(1-x_c)^{d_c-1}.
		\end{array}
	\right.
\end{equation}

Easily one can prove the following:

\begin{lemma}\label{l1} Let $(x_c, d_c)$ be the only solution of the system \eqref{sys}. Then 
$$
	d_c =  \sup\limits_{(0,1)}\frac{\ln(1- c x^2)}{\ln(1- x)} =   \frac{\ln(1- cx_c^2)}{\ln(1- x_c)}.
$$
\end{lemma}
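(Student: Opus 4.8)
The plan is to identify the analytic quantity $\sup_{(0,1)}\varepsilon(x,c)$ with the geometric quantity $d_c$ defined right before the statement, by exploiting the two families $\varphi_d(t)=(1-t)^d$ and the parabola $p_c(t)=1-ct^2$, and then to read off the system \eqref{sys} as the tangency conditions of $\varphi_{d_c}$ and $p_c$ at the contact point $x_c$. As preliminaries I would record the elementary sign facts: for $x\in(0,1)$ one has $0<cx^2<1$ (since $0<c<1$), so $\ln(1-cx^2)<0$ and $\ln(1-x)<0$; hence $\varepsilon(\cdot,c)$ is a well-defined, positive, continuous function on $(0,1)$, and $\varepsilon(x,c)<1$ because $1-cx^2>1-x$. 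A short asymptotic computation ($\ln(1-cx^2)\sim-cx^2$, $\ln(1-x)\sim-x$ as $x\to0^+$; and $\ln(1-cx^2)\to\ln(1-c)$ finite while $\ln(1-x)\to-\infty$ as $x\to1^-$) shows $\varepsilon(x,c)\to 0$ at both endpoints. Consequently $M:=\sup_{(0,1)}\varepsilon(\cdot,c)$ lies in $(0,1)$ and is attained at an interior point.

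The crucial observation is the equivalence, valid for every $d\in(0,1]$ and $x\in(0,1)$,
\begin{equation*}
	\varphi_d(x)<p_c(x)\ \iff\ (1-x)^d<1-cx^2\ \iff\ d\ln(1-x)<\ln(1-cx^2)\ \iff\ d>\varepsilon(x,c),
\end{equation*}
where the last step uses $\ln(1-x)<0$; the analogous chain holds with each $<$ replaced by $=$. Thus $\varphi_d<p_c$ on all of $(0,1)$ if and only if $d$ is a strict upper bound for $\varepsilon(\cdot,c)$, and taking the infimum over such $d$ yields $d_c=M=\sup_{(0,1)}\varepsilon(\cdot,c)$.

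It remains to produce the contact point. Since $\varphi_1(t)=1-t<1-ct^2=p_c(t)$ on $(0,1)$ (as $ct<1$), the defining set for $d_c$ is nonempty, so $d_c\le1$; and since $\varphi_d(1/2)=(1/2)^d\to1>p_c(1/2)$ as $d\to0^+$, one gets $d_c>0$. Passing to the limit along a minimizing sequence $d_k\downarrow d_c$ gives $\varphi_{d_c}\le p_c$ on $(0,1)$. If this inequality were strict throughout, then — using that $\varphi_d-p_c$ is strictly negative near $0$ (it vanishes at $0$ with derivative $-d<0$ there) and near $1$ (where $\varphi_d\to0$ while $p_c\to1-c>0$), uniformly for $d$ near $d_c$, while on any compact subinterval $\varphi_{d_c}-p_c$ is bounded above by a negative constant — one could decrease $d$ slightly and keep $\varphi_d<p_c$ on $(0,1)$, contradicting the definition of $d_c$. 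Hence there is $x_c\in(0,1)$ with $\varphi_{d_c}(x_c)=p_c(x_c)$; since $x_c$ is an interior maximum of the $C^1$ function $\varphi_{d_c}-p_c$, also $\varphi_{d_c}'(x_c)=p_c'(x_c)$. Writing these out, $(1-x_c)^{d_c}=1-cx_c^2$ and $d_c(1-x_c)^{d_c-1}=2cx_c$, gives precisely \eqref{sys}, and the first identity rearranges to $d_c=\ln(1-cx_c^2)/\ln(1-x_c)$. Combined with $d_c=\sup_{(0,1)}\varepsilon(\cdot,c)$, this is the assertion.

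The only genuinely delicate point is the last paragraph: verifying that $\varphi_{d_c}$ actually \emph{touches} $p_c$ at an interior point, which is where the boundary behaviour of $\varphi_d-p_c$ near $t=0$ and $t=1$ must be controlled uniformly in $d$; everything else is formal. Uniqueness of the solution of \eqref{sys}, implicit in the statement, fits the same circle of ideas: a solution of \eqref{sys} is exactly an interior critical point of $\varepsilon(\cdot,c)$ — one checks that $\partial_x\varepsilon(x,c)=0$ is equivalent to $2cx(1-x)/(1-cx^2)=\varepsilon(x,c)$, which, eliminating $\varepsilon(x,c)=d$ via $(1-x)^{d}=1-cx^2$, reproduces \eqref{sys} — and a direct sign analysis shows $\varepsilon(\cdot,c)$ is unimodal on $(0,1)$, so that this critical point is unique and coincides with the maximizer $x_c$ found above.
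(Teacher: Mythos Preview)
Your argument is correct and is exactly the approach the paper sets up but does not carry out: the paper only writes ``Easily one can prove the following'' and states the lemma, so there is nothing to compare against beyond the geometric framework (the families $\varphi_d$ and $p_c$ and the definition of $d_c$) introduced just before it. Your key equivalence $\varphi_d(x)<p_c(x)\iff d>\varepsilon(x,c)$ is the natural way to link the geometric definition of $d_c$ with the analytic supremum, and the identification of \eqref{sys} as the first-order tangency conditions at the contact point is precisely what the paper asserts (without proof) in the sentence preceding the lemma.

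One small simplification you might adopt: your third paragraph works harder than necessary. Since you already established in the first paragraph that $M=\sup_{(0,1)}\varepsilon(\cdot,c)$ is \emph{attained} at some interior point $x_\star$, and in the second paragraph that $d_c=M$, it follows immediately that $\varphi_{d_c}(x_\star)=p_c(x_\star)$; there is no need for the uniform-in-$d$ boundary analysis or the contradiction argument. The contact point is simply the maximizer $x_\star$, and since $\varphi_{d_c}-p_c\le0$ with equality at the interior point $x_\star$, the derivative condition follows. Your longer route is not wrong, just redundant given what you already proved.
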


Also, one easily checks that
$$
	x_c = \dfrac{1 + \sqrt{1 - \frac{d_c}{c} (2-d_c)}}{2-d_c},
$$
and that the function $\psi (c) = \frac{d_c}{c}$ is increasing in $c$, with $\lim\limits_{c\to 0} \psi(c) = m_0(2).$ We define  
\begin{equation}\label{x0}
	x_0 :=  \dfrac{1 + \sqrt{1 - 2m_0(2)}}{2} \approx 0.715.
\end{equation}
Finally, in view of Lemma \ref{l1}, \eqref{th1-ep} and \eqref{th1-ep2}, let us parametrize the analysis above by the ellipticity ratio $\tau := \frac{\lambda}{\Lambda} \in (0,1)$, that is, define:
$$
	c(\tau) = \frac{4}{\tau (1+\tau^{-1})^2}, \quad \varepsilon (\tau) = \sup\limits_{(0,1)}\frac{\ln(1- c(\tau)x^2)}{\ln(1- x)}, \quad x(\tau) = \dfrac{1 + \sqrt{1 - \frac{ \varepsilon (\tau) }{c(\tau)} (2- \varepsilon (\tau) )}}{2- \varepsilon (\tau) }.
$$
As before, it is easy to see that the function $\tilde{\psi}(\tau) := \left ( \tau^{-1} + 1 \right ) \varepsilon(\tau) $ is too increasing with respect to $\tau$. In particular, according to \eqref{lim},
$$
	\left ( \tau^{-1} + 1 \right )\varepsilon(\tau) \ge \lim\limits_{\tau \to 0}  \varepsilon(\tau) > 1.629,
$$ 
for all $\tau \in (0,1)$.

To obtain even better lower bonds for $\varepsilon$, one can evaluate the function $\varepsilon(x, \tau)$ defined \eqref{assintotic ineq for eps} at a point obtained by the interpolation between $x_0$, obtained in \eqref{x0}, and $x_1 = 1$. Namely, if one defines $t(\tau) := x_0 + (1-x_0) c(\tau)^{2.4},$	
the following estimate 
$$
	\varepsilon(\tau) \ge \frac{\ln \left ( 1- c(\tau) t^2(\tau) \right )}{\ln \left ( 1-  t(\tau) \right )},
$$
holds for all $0< \tau < 1$.

\begin{center}
		\includegraphics[scale=0.29]{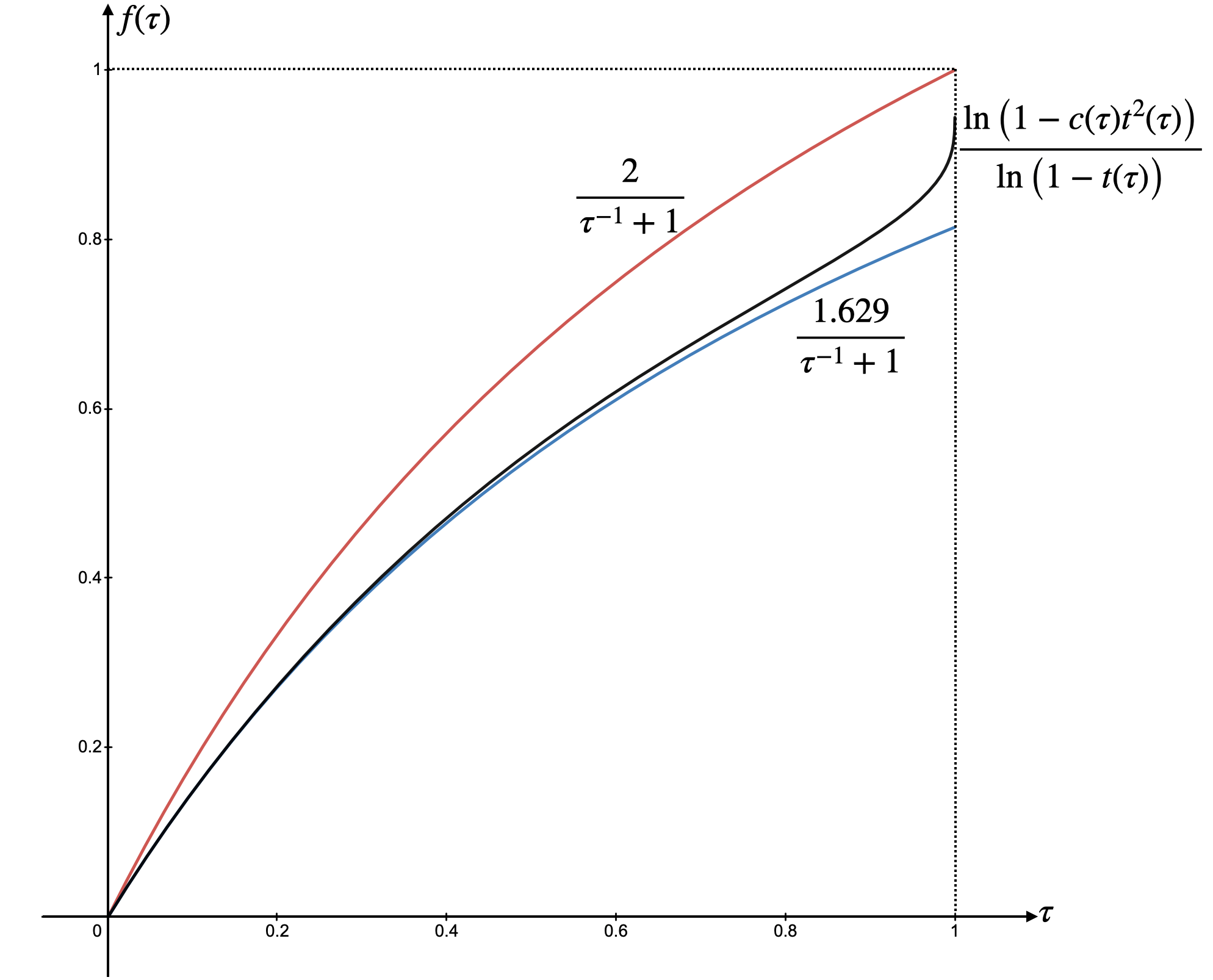}
\end{center}

For a fixed ellipticity ratio $\tau = \frac{\lambda}{\Lambda}$, better estimates can be obtained numerically by approximating the solution of the optimization problem \eqref{th1-ep}.

\bibliographystyle{amsplain, amsalpha}

\end{document}